\newtheorem{theorem}{Theorem}[section]
\newtheorem{lemma}[theorem]{Lemma}
\newtheorem{claim}[theorem]{Claim}
\newtheorem{thm}[theorem]{Theorem}
\newtheorem{obs}[theorem]{Observation}
\newtheorem{constru}[theorem]{Construction}
\def\eps{\varepsilon}
\def\COMMENT#1{}
\let\COMMENT=\footnote
\title{Large subgraphs in rainbow-triangle free colorings} 
\author{Adam Zsolt Wagner\footnote{University of Illinois at Urbana-Champaign, Urbana, Illinois 61801, USA, {\tt
zawagne2@illinois.edu}. }}
\date{}
\begin{document}
\maketitle
\begin{abstract}
Fox--Grinshpun--Pach showed that every $3$-coloring of the complete graph on $n$ vertices without a rainbow triangle contains a clique of size $\Omega\left(n^{1/3}\log^2 n\right)$ which uses at most two colors, and this bound is tight up to the constant factor. We show that if instead of looking for large cliques one only tries to find subgraphs of large chromatic number, one can do much better. We show that every such coloring contains a $2$-colored subgraph with chromatic number at least $n^{2/3}$, and this is best possible. We further show that for fixed positive integers $s,r$ with $s\leq r$, every $r$-coloring of the edges of the complete graph on $n$ vertices without a rainbow triangle contains a subgraph that uses at most $s$ colors and has chromatic number at least $n^{s/r}$, and this is best possible. Fox--Grinshpun--Pach previously showed a clique version of this result.

As a direct corollary of our result we obtain a generalisation of the celebrated theorem of Erd\H{o}s-Szekeres, which states that any sequence of $n$ numbers contains a monotone subsequence of length at least $\sqrt{n}$. We prove that if an $r$-coloring of the edges of an $n$-vertex tournament does not contain a rainbow triangle then there is an $s$-colored directed path on $n^{s/r}$ vertices, which is best possible. This gives a partial answer to a question of Loh.
\end{abstract}

\section{Introduction}

A Gallai-coloring of a complete graph is an edge coloring such that no triangle is colored with three distinct colors. Such colorings arise naturally in several areas including in information theory \cite{infotheo}, in the study of partially ordered sets, as in Gallai's original paper \cite{gallaicolpaper}, and in the study of perfect graphs \cite{perfect}. Several Ramsey-type results in Gallai-colored graphs have also emerged in the literature (see e.g. \cite{gall1}, \cite{gall2}, \cite{gall3}, \cite{gall4}), but they mostly focus on finding large monochromatic structures in such colorings.
Our main result is the observation that certain proof techniques used by Fox--Grinshpun--Pach \cite{fox} for solving the multicolor Erd\H{o}s--Hajnal conjecture for rainbow triangles also give a partial answer for Loh's question \cite{loh}, that asks the following: what is the value of $f(n,r,s)$, the maximum number such that every $r$-coloring of the edges of the transitive tournament on $n$ vertices contains a directed path with at least $f(n,r,s)$ vertices whose edges have at most $s$ colors?

\subsection{Erd\H{o}s--Hajnal for rainbow triangles}

Erd\H{o}s showed \cite{erdos} that a random graph on $n$ vertices almost surely contains no clique or independent set of order $2\log n$. On the other hand, the Erd\H{o}s--Hajnal conjecture \cite{ehajnal} states that for each fixed graph $H$ there is an $\eps=\eps(H)>0$ such that every graph $G$ on $n$ vertices which does not contain a fixed induced subgraph $H$ has a clique or independent set of order $n^\eps$, much larger than in the case of general graphs.  The Erd\H{o}s--Hajnal conjecture is still open, but there are now several partial results on it - we refer the reader to the introduction of \cite{fox} and the recent survey \cite{chudnovsky}. In the present paper we will be only interested in a special case of the multicolor generalisation of the Erd\H{o}s--Hajnal conjecture. Hajnal \cite{hajnal} conjectured that there is an $\eps>0$ such that every $3$-coloring of the edges of the complete graph on $n$ vertices without a rainbow triangle (that is, a triangle with all its edges different colors) contains a set of order $n^{\eps}$ which uses at most two colors. Fox--Grinshpun--Pach proved Hajnal's conjecture and further determined a tight bound on the order of the largest guaranteed $2$-colored set in any such coloring. A \emph{Gallai $r$-coloring} is a coloring of the edges of a complete graph using $r$ colors without rainbow triangles.

\begin{thm}[\label{fgp1}Fox--Grinshpun--Pach, \cite{fox}]
Every Gallai-$3$-coloring on $n$ vertices contains a set of order $\Omega\left(n^{1/3}\log^2n\right)$ which uses at most two colors, and this bound is tight up to a constant factor.
\end{thm}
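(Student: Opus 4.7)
The strategy exploits Gallai's canonical structural theorem: any Gallai coloring of $K_n$ using at least two colors admits a nontrivial partition $V_1,\ldots,V_k$ with $k\geq 2$ such that at most two colors appear on edges between parts, and between any fixed pair of parts only one color is used. This yields a $2$-edge-colored reduced graph $R$ on $k$ vertices. I would assume throughout that red and blue are the ``between-part'' colors on $R$, so green edges only appear inside parts.

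I would prove the lower bound $f(n)=\Omega(n^{1/3}\log^2 n)$ by strong induction on $n$, where $f(n)$ denotes the guaranteed size of a $2$-colored vertex set. Apply the Erd\H{o}s--Szekeres theorem to $R$ to extract a monochromatic clique $A\subseteq[k]$ of size $\ell=\Omega(\log k)$; WLOG it is red, so all between-part edges in $U=\bigcup_{i\in A}V_i$ are red. Recursively apply the theorem inside each $V_i$ ($i\in A$) to obtain a $2$-colored subset $S_i$. A pigeonhole on the three possible color-pairs retains a constant fraction of the $S_i$ using the same pair; if that shared pair contains red, then the union of those $S_i$ is $2$-colored in $U$. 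The residual case, where the shared pair is $\{\text{blue},\text{green}\}$ and hence incompatible with the red inter-part edges, is handled by strengthening the inductive hypothesis to separately track the maximum $2$-colored set whose two colors are a pre-specified pair, or by recursing within a single largest part. Balancing $k$ against $\max_i|V_i|$ in the resulting recurrence yields the polynomial exponent $1/3$, and two nested Erd\H{o}s--Szekeres applications (one on $R$, one inside the recursion) each contribute a factor of $\log n$.

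For tightness I would describe a recursive blow-up construction: start from a small base Gallai $3$-coloring and at each step replace every vertex by a copy of the previous level, with between-block colors drawn from a fixed $2$-colored pattern chosen so that no $2$-colored subset grows faster than a constant plus a $\log$ factor per step. Iterating this for $\Theta(\log n)$ levels with block sizes matching the recursion produces a coloring attaining $O(n^{1/3}\log^2 n)$. The main obstacle is identifying the right strengthened inductive statement: naively losing a factor of $3$ at each level through the color-pair pigeonhole degrades the constant exponent, so one must carefully book-keep per-color-pair guarantees so that the across-parts pigeonhole always delivers a pair compatible with the between-part color. Getting both the exponent $1/3$ and the full $\log^2 n$ factor requires that the two Erd\H{o}s--Szekeres applications contribute independently, without being eaten up by the per-level combinatorial losses in the combining step.
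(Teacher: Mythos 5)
This statement is quoted from Fox--Grinshpun--Pach and the paper gives no proof of it, so there is no in-paper argument to compare against; the closest internal analogue is Theorem~\ref{chromaticnumber}, a product inequality proved by induction through Gallai's structure lemma, which in its clique-number form (FGP's Theorem~7.2) yields only the log-free bound $\Omega(n^{1/3})$. Your skeleton --- Gallai partition, $2$-colored reduced graph, recursion into the parts, and per-color-pair bookkeeping to avoid losing a constant factor of $3$ at each level --- is indeed the standard route to the polynomial exponent $1/3$, and the strengthening you propose (tracking all three pairwise quantities simultaneously, in effect proving a product inequality such as $g_{12}\,g_{13}\,g_{23}\geq n$) does work for that part.

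The genuine gap is in the $\log^2 n$ factor, which is the entire difficulty of the theorem. Lemma~\ref{gallai} only guarantees $t\geq 2$ parts, and nothing prevents the Gallai partition from having $k=2$ (or $O(1)$) parts at every level of the recursion; in that regime the Ramsey/Erd\H{o}s--Szekeres application to $R$ returns a monochromatic clique of bounded size and contributes no $\log k$ gain whatsoever. So the claim that ``two nested Erd\H{o}s--Szekeres applications each contribute a factor of $\log n$'' does not follow from the argument as described: the gain has to be extracted by an amortized, weighted analysis over the whole recursion tree (trading off the number of parts, their sizes, and the Ramsey gain at each node), and this global accounting --- which occupies most of the Fox--Grinshpun--Pach paper --- is missing from the proposal. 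The tightness construction has the analogous gap: the matching upper-bound example substitutes \emph{random} $2$-colorings (Ramsey graphs) of carefully chosen sizes at each level, and verifying that every $2$-colored set in the resulting coloring has order $O(n^{1/3}\log^2 n)$ is itself a nontrivial computation, not a consequence of the blow-up description alone. As written, the proposal establishes (modulo routine details) only the weaker bound $\Omega(n^{1/3})$.
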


Instead of looking for large complete subgraphs, it is also natural to try to find $2$-colored subgraphs with large chromatic number. It is easy to show that every $3$-edge-colored complete graph on $n$ vertices contains a $2$-colored subgraph with chromatic number at least $\sqrt{n}$. Indeed, the complement of the graph $G_g$ consisting of green edges is the graph $G_{rb}$ consisting of red and blue edges, and every graph $G$ satisfies $\chi(G)\cdot\chi(G^c)\geq |V(G)|$. It is not hard to construct an infinite set of examples on $n^2$ vertices for some integer $n$ where all three $2$-colored subgraphs have chromatic number precisely $n$, hence this is best possible. Our first result is that in the case of Gallai-colorings, we can do much better.

\begin{thm}\label{chrom1}
Every Gallai-$3$-coloring on $n$ vertices contains a $2$-colored subgraph that has chromatic number at least $n^{2/3}$.
\end{thm}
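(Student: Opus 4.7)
The plan is to prove, by induction on $n$, the sharper inequality
\[
\chi_{\rb}(G)\,\chi_{\rg}(G)\,\chi_{\bg}(G) \;\geq\; n^{2},
\]
where $\chi_{c_{1}c_{2}}(G)$ denotes the chromatic number of the spanning subgraph of $G$ whose edges are the $\{c_{1},c_{2}\}$-colored edges. The theorem follows by AM--GM, giving $\max_{c_{1}c_{2}}\chi_{c_{1}c_{2}}(G)\geq(\chi_{\rb}\chi_{\rg}\chi_{\bg})^{1/3}\geq n^{2/3}$, and the sharper form is exactly tight on the extremal three-level construction. The base case $n=1$ is immediate, and for the inductive step I would invoke Gallai's structure theorem to obtain a partition $V=V_{1}\sqcup\cdots\sqcup V_{k}$ with $k\geq 2$, at most two colors appearing between parts, and each pair of parts joined by a single color. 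Since $k\geq 2$ every $|V_{i}|<n$, so the inductive hypothesis applies to each $V_{i}$.

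The heart of the argument is the case when exactly one color, say red, appears between parts. Both pair-subgraphs containing red then have the complete multipartite graph $K_{n_{1},\ldots,n_{k}}$ as a spanning subgraph, forcing every proper coloring to use disjoint color sets on distinct parts, so
\[
\chi_{\rb}(G)=\sum_{i}\chi_{\rb}(V_{i}),\qquad \chi_{\rg}(G)=\sum_{i}\chi_{\rg}(V_{i}),\qquad \chi_{\bg}(G)=\max_{i}\chi_{\bg}(V_{i}),
\]
the last equality because the $\{b,g\}$-subgraph decomposes as a disjoint union across parts. Applying Cauchy--Schwarz together with the inductive hypothesis $\chi_{\rb}(V_{i})\chi_{\rg}(V_{i})\chi_{\bg}(V_{i})\geq n_{i}^{2}$ and the bound $\chi_{\bg}(V_{i})\leq\chi_{\bg}(G)$ gives
\[
\chi_{\rb}(G)\chi_{\rg}(G)\;\geq\;\Bigl(\sum_{i}\sqrt{\chi_{\rb}(V_{i})\chi_{\rg}(V_{i})}\Bigr)^{2}\;\geq\;\Bigl(\sum_{i}\frac{n_{i}}{\sqrt{\chi_{\bg}(V_{i})}}\Bigr)^{2}\;\geq\;\frac{n^{2}}{\chi_{\bg}(G)},
\]
which rearranges to the desired product bound.

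The remaining case is that two colors appear between parts. Here the plan is to reduce to the previous case by applying Gallai's theorem once more, now to the induced two-coloring of the reduced complete graph $K_{k}$: generically this yields a coarser partition of $V(G)$ in which only one color appears between the new meta-parts, so the Cauchy--Schwarz argument above applies at the meta-level (each meta-part still satisfies the inductive hypothesis as a sub-Gallai-coloring). The main obstacle I expect is the sub-case where the reduced two-coloring is a prime graph (in the modular-decomposition sense) and so cannot be coarsened; I would handle this by working directly with the weighted chromatic numbers of the blow-ups of $K_{k}$, using the identity $\chi_{w}(R^{*},w)\chi_{w}(B^{*},w)\geq\sum_{i}w_{i}$ that follows from applying $\chi(H)\chi(\overline{H})\geq|V(H)|$ to the blown-up graph, to recover a product bound of the same shape.
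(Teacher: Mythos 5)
Your overall strategy is the same as the paper's: the paper proves the stronger product inequality $\prod_{|S|=2}\chi(G_S)\geq n^2$ (Theorem~\ref{chromaticnumber} with $r=3$, $s=2$) by induction on $n$, using Gallai's structure theorem, a complementation inequality, and H\"older/Cauchy--Schwarz, exactly as you propose. Your one-color case is correct and complete: the join/disjoint-union identities for $\chi_{\rb},\chi_{\rg},\chi_{\bg}$ and the Cauchy--Schwarz step are all valid, and this is a special case of the paper's argument.

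The gap is in the two-color case. The detour of re-applying the decomposition to the reduced $2$-coloring of $K_k$ is unnecessary but harmless; the real content sits in your ``prime'' sub-case, which is unavoidable, and your treatment of it is not yet a proof. The identity you state, $\chi_w(R^*,w)\chi_w(B^*,w)\geq\sum_i w_i$ with a \emph{single} weight function $w$, does not capture what is needed: the required inequality is
$$\chi_{\rg}(G)\,\chi_{\bg}(G)\;\geq\;\sum_{i}\chi_{\rg}(V_i)\,\chi_{\bg}(V_i),$$
whose right-hand side involves two independent parameters per part, and it is not clear how a one-parameter blow-up of $R^*$ and $B^*$ relates its chromatic numbers to $\chi_{\rg}(G)$ and $\chi_{\bg}(G)$ (e.g.\ a clique blow-up of $R^*$ changes one chromatic number but the complementary independent-set blow-up of $B^*$ leaves the other unchanged). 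The paper's key lemma (Claim~\ref{beautyclaim}, which the author identifies as the main new idea) fixes this with a specific gadget: replace $G[V_i]$ by $\chi_{\rg}(V_i)$ disjoint copies of $K_{\chi_{\bg}(V_i)}$, with edges inside the cliques colored blue and edges between them colored red. One then needs Observation~\ref{replacingchromthingie} --- that substituting, inside each part of a join/non-join partition, any graph with the same chromatic number leaves the global chromatic number unchanged --- to conclude that the resulting exactly-$2$-colored complete graph $G'$ on $\sum_i\chi_{\rg}(V_i)\chi_{\bg}(V_i)$ vertices has red graph of chromatic number $\chi_{\rg}(G)$ and blue graph of chromatic number $\chi_{\bg}(G)$; complementation in $G'$ then gives the displayed inequality. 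With that lemma in hand, your plan closes: $\chi_{\rb}(G)=\sum_i\chi_{\rb}(V_i)$ and Cauchy--Schwarz in the form $\bigl(\sum_i a_i\bigr)\bigl(\sum_i b_ic_i\bigr)\geq\bigl(\sum_i\sqrt{a_ib_ic_i}\bigr)^2$ finish the induction. So the missing ingredient is precisely the two-parameter gadget and the substitution observation; everything else in your outline is sound.
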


Fox--Grinshpun--Pach further obtained a generalisation of Theorem \ref{fgp1} to more colors.

\begin{thm}[Fox--Grinshpun--Pach, \cite{fox}]\label{fgp2}
Let $r$ and $s$ be fixed positive integers with $s\leq r$.  Every Gallai-$r$-coloring on $n$ vertices contains a set of order $\Omega\left(n^{\binom{s}{2}/\binom{r}{2}}\log^{c_{r,s}}n\right)$ which uses at most $s$ colors, and this bound is tight up to a constant factor. Here $c_{r,s}$ is a constant depending only on $r$ and $s$.
\end{thm}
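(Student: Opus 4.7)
The plan is to prove Theorem~\ref{fgp2} by induction, with $s = 2$ (for all $r$) serving as the base case --- the methods behind Theorem~\ref{fgp1} extend to general $r$ with exponent $1/\binom{r}{2}$ --- and $s = r$ trivial. The structural engine is Gallai's theorem: every Gallai-$r$-coloring of $K_n$ admits a nontrivial partition $V(K_n) = V_1 \sqcup \cdots \sqcup V_t$, $t \geq 2$, such that only at most two of the $r$ colors appear on edges between distinct parts, and the reduced graph on $\{V_1,\ldots,V_t\}$ is a $2$-colored complete graph.

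Given such a partition with ``external'' colors $a$ and $b$, I would combine a weighted Ramsey argument on the reduced $2$-coloring with the induction hypothesis applied inside each part. On the $2$-colored reduced $K_t$, one applies a weighted Ramsey-type statement to extract a subset $\mathcal{I} \subseteq \{1,\ldots,t\}$ forming a monochromatic clique in, say, color $a$; the parts are weighted by $w_i = |V_i|^{\binom{s-1}{2}/\binom{r-1}{2}}$, precisely the quantity promised by the induction hypothesis inside $V_i$. Within each $V_i$ with $i\in\mathcal{I}$, the induction supplies an $(s-1)$-colored subset $U_i$ of size $\Omega(w_i \log^{c_{r-1,s-1}}|V_i|)$. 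Pigeonholing on the $\binom{r}{s-1}$ possible $(s-1)$-color subsets, we can pass to a further subfamily on which each $U_i$ uses the same set of $s-1$ colors, losing only a constant factor depending on $r$ and $s$. The union $\bigcup_{i} U_i$, together with the color-$a$ edges between parts, then uses at most $s$ colors in total.

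To hit the target exponent $\binom{s}{2}/\binom{r}{2}$, the key step is analyzing the weighted Ramsey problem on the $2$-colored $K_t$ with the weights $w_i$: by concavity of $x \mapsto x^{\alpha}$ for $\alpha = \binom{s-1}{2}/\binom{r-1}{2} < 1$, the maximum weighted monochromatic clique is at least a polynomial in the total weight $\sum_i w_i$, which in turn is at least $n^{\binom{s-1}{2}/\binom{r-1}{2}}$ (achieved in the balanced regime) or better (when parts are unbalanced, since then the induction inside one part already delivers an even stronger bound). The polylogarithmic factor $\log^{c_{r,s}} n$ is accumulated by iterating the Gallai decomposition $O(\log n)$ times and collecting a $\log t$ factor per level through the Ramsey step, with the constants $c_{r,s}$ satisfying a linear recurrence in $r$ and $s$.

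I expect the main obstacle to be calibrating the weighted Ramsey step across the full range of Gallai partitions --- from ``one huge part plus many tiny parts'' to ``many balanced parts'' --- so that the bound is sharp in every regime. A single application of Gallai's theorem followed by ordinary Ramsey is insufficient; the decomposition must be iterated down to an appropriate termination condition with the weights updated at each step. For the matching upper bound, the construction is a recursive blow-up: take a near-extremal Gallai-$(r-1)$-coloring on a smaller vertex set and replace each vertex by a near-extremal Gallai-$r$-coloring on a suitably sized clique, introducing the new color on the blown-up edges in a carefully controlled way, so that any $s$-colored subgraph must either stay essentially inside one block (losing a factor of $n$) or span only polylogarithmically many blocks (losing what Ramsey allows); balancing the two constraints forces the exponent $\binom{s}{2}/\binom{r}{2}$.
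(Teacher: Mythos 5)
There is no proof of Theorem~\ref{fgp2} in this paper to compare against: the result is quoted from Fox--Grinshpun--Pach~\cite{fox}, where its proof occupies the bulk of a long paper. Measured against that argument, your sketch has the right family of ingredients (Gallai partition, a weighted Ramsey step on the reduced $2$-coloring, induction, a product-type construction for tightness), but two of its steps fail in a way that is not merely technical. First, the inductive step cannot reach the claimed exponent. Inside a part $V_i$ of a Gallai partition all $r$ colors may still occur, so the hypothesis you are entitled to invoke there for $(s-1)$-colored sets carries exponent $\binom{s-1}{2}/\binom{r}{2}$, not $\binom{s-1}{2}/\binom{r-1}{2}$; and even the latter is \emph{strictly smaller} than the target $\binom{s}{2}/\binom{r}{2}$ whenever $s<r$, since their ratio is $r(s-2)/(s(r-2))<1$. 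Concretely, take $t=2$ balanced parts joined by color $a$ and $(r,s)=(4,3)$: your set $U_1\cup U_2$ has size $O\left(n^{1/3}\,\mathrm{polylog}\, n\right)$ against a target of $n^{1/2}\log^{c}n$, and no choice of monochromatic clique in the reduced graph rescues this. The mechanism that actually works exploits that for \emph{every} $s$-set $S$ containing an external color the largest $S$-colored clique is additive across the parts, so one must carry all $\binom{r}{s}$ quantities through the induction simultaneously --- e.g.\ via the product inequality $\prod_{|S|=s} g_S \geq n^{\binom{r-2}{s-2}}$, the clique analogue of Theorem~\ref{chromaticnumber} --- rather than committing in advance to one $(s-1)$-set per part and unioning. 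Second, your base case ``$s=2$, general $r$'' is asserted rather than proved, and it is precisely the hard core of \cite{fox}: obtaining the sharp power of $\log n$ there requires their key lemma on weighted Ramsey numbers and a delicate analysis across all partition regimes; nothing in the statement of Theorem~\ref{fgp1} extends to general $r$ for free.

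The matching construction is also not the one you describe. A recursive blow-up that introduces one new color per level is essentially Construction~\ref{eszconstrueasy}, and in that coloring the largest $s$-colored \emph{clique} has size $n^{s/r}$ (the chromatic-number threshold), far above $n^{\binom{s}{2}/\binom{r}{2}}$, so it is nowhere near extremal for cliques. The construction in \cite{fox} is instead a lexicographic/product combination of $\binom{r}{2}$ extremal two-colored Ramsey graphs, one for each \emph{pair} of colors; an $s$-colored set then interacts nontrivially with only $\binom{s}{2}$ of the $\binom{r}{2}$ coordinates, which is what forces the exponent $\binom{s}{2}/\binom{r}{2}$ and, with pseudorandom layers, the correct power of $\log n$.
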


Moreover, the value of $c_{r,s}$ was exactly determined in \cite{fox}. We prove a corresponding theorem about subgraphs with few colors and large chromatic number.

\begin{thm}\label{chrom2}
Let $r$ and $s$ be fixed positive integers with $s\leq r$.  Every Gallai-$r$-coloring on $n$ vertices contains an $s$-colored subgraph that has chromatic number at least $n^{s/r}$.
\end{thm}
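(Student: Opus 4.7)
The plan is to induct on the number of vertices $n$. The base case $n = 1$ is trivial since $\chi = 1 = 1^{s/r}$, and the case $s = r$ is trivial as well since $\chi(K_n) = n$. For the inductive step with $1 \leq s < r$, apply Gallai's theorem to the given Gallai $r$-colouring of $K_n$ to obtain a Gallai partition $V = V_1 \sqcup \cdots \sqcup V_k$ with $k \geq 2$ such that only two colours $\alpha,\beta$ appear on edges between distinct parts; write $n_i = |V_i| < n$.

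The key structural observation driving the proof is the following identity: if an $s$-subset $S \subseteq [r]$ contains both $\alpha$ and $\beta$, then every pair of distinct parts $V_i, V_j$ is completely joined by edges in $H_S$, forcing any proper colouring of $H_S$ to assign disjoint colour palettes to different parts. Hence
\[
\chi(H_S) \;=\; \sum_{i=1}^{k} \chi(H_S[V_i]).
\]
Because $s \leq r$, the map $t \mapsto t^{s/r}$ is concave and vanishes at $0$, so it is subadditive, giving $\sum n_i^{s/r} \geq (\sum n_i)^{s/r} = n^{s/r}$. It therefore suffices to produce a single $S \supseteq \{\alpha,\beta\}$ of size $s$ such that $\chi(H_S[V_i]) \geq n_i^{s/r}$ holds for every $i$. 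To find one, apply the inductive hypothesis within each part: for each $V_i$ there is some $s$-subset $S_i$ with $\chi(H_{S_i}[V_i]) \geq n_i^{s/r}$. A pigeonhole over the $\binom{r}{s}$ candidate subsets then singles out one choice $S^*$ selected by parts of combined weight $\Omega(n)$; when this $S^*$ satisfies $S^* \supseteq \{\alpha,\beta\}$, the identity together with subadditivity concludes the argument.

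The principal obstacle is the residual case in which the popular $S^*$ does not contain both $\alpha$ and $\beta$, since then the identity collapses to a maximum over parts and no longer sums up nicely. I would handle this by invoking the reduced two-coloured graph $R$ on $[k]$: its colour classes satisfy $\chi(R_\alpha)\chi(R_\beta) \geq k$, so one of them, say $R_\alpha$, has chromatic number at least $\sqrt{k}$. Using the blow-up/weighted-chromatic-number interpretation of $H_S$ for $S$ containing $\alpha$ (but not necessarily $\beta$), one can combine the reduced chromatic contribution with the within-part bounds from the inductive hypothesis; a careful balancing of the two regimes (popular-subset versus reduced-graph contribution) should furnish an $s$-subset achieving chromatic number at least $n^{s/r}$. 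Tracking the constants precisely so that the exact exponent $s/r$ emerges, rather than only $\Omega(n^{s/r})$, is the main technical subtlety I anticipate.
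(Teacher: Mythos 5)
Your structural observations are correct (the Gallai partition, the identity $\chi(H_S)=\sum_i\chi(H_S[V_i])$ when $S\supseteq\{\alpha,\beta\}$, and the subadditivity of $t\mapsto t^{s/r}$ all appear in the paper's argument), but the induction as you have set it up does not close, and the two devices you propose to rescue it cannot be made to work in this form. First, inducting on the bare existential statement means the inductive hypothesis only hands you, for each part $V_i$, \emph{some} $s$-subset $S_i$ with $\chi(H_{S_i}[V_i])\geq n_i^{s/r}$; your pigeonhole then yields a popular $S^*$ certified only by parts of total size at least $n/\binom{r}{s}$, giving $\chi(H_{S^*})\geq (n/\binom{r}{s})^{s/r}$. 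That is strictly weaker than $n^{s/r}$, so it cannot be fed back into the induction as stated; and if you weaken the hypothesis to an $\Omega$-bound, the multiplicative loss of $\binom{r}{s}^{s/r}$ recurs at every level of the (possibly deeply nested) Gallai decomposition and the constants degrade without bound. Second, the residual case where the popular $S^*$ fails to contain both $\alpha$ and $\beta$ is exactly where the difficulty lives, and ``a careful balancing of the two regimes should furnish'' an appropriate subset is a placeholder, not an argument: when $S^*$ contains only $\alpha$, say, the graph $H_{S^*}$ restricted across parts is governed by $R_\alpha$, and there is no evident way to trade $\chi(R_\alpha)\geq\sqrt{k}$ against the within-part bounds to recover the exact exponent.

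The paper resolves precisely this difficulty by strengthening the statement to be proved: Theorem \ref{chromaticnumber} asserts the multiplicative bound $\prod_{|S|=s}\chi(G_S)\geq n^{\binom{r-1}{s-1}}$ over \emph{all} $s$-subsets simultaneously, which is a loss-free inductive invariant. The decomposition step is then handled by three cases according to $|S\cap Q|$ for the two bridge colors $Q=\{q_1,q_2\}$, with the key new ingredient being Claim \ref{beautyclaim}: for $S$ containing exactly one bridge color, $\chi(G_S)\chi(G_{S^*})\geq\sum_i\chi(S,i)\chi(S^*,i)$, where $S^*$ swaps $q_1$ for $q_2$. Hölder's inequality (Lemma \ref{holderineq}) then recombines the parts without losing any constant, and Theorem \ref{chrom2} falls out at the very end by taking the geometric mean of the $\binom{r}{s}$ factors. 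If you want to salvage your approach, the lesson is that you must induct on a statement that aggregates the contributions of all $s$-subsets rather than committing to a single one inside each part.
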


Both Theorems \ref{chrom1} and \ref{chrom2} are sharp, as seen in Construction~\ref{eszconstrueasy}. The motivation of Fox--Grinshpun--Pach for proving Theorem \ref{fgp2} was to get one step closer towards proving the Erd\H{o}s--Hajnal conjecture. On the other hand, our main motivation for proving Theorem \ref{chrom2} came from a completely different direction - we tried to give an answer to Loh's question (see Section~\ref{lohquestionsection}) - in fact, when proving our main results we were not even aware of the Fox--Grinshpun--Pach paper. As it happens, we were only able to give a partial answer to Loh's question, and quite surprisingly, our proof of Theorem \ref{chrom2} was very similar to their proof of a weaker version of Theorem \ref{fgp2}: we use Gallai's structure theorem for Gallai-colored complete graphs to obtain a nice block partition of the vertex set, and then the theorem follows by induction with some more work.

\subsection{Long subchromatic paths in tournaments and Loh's question}\label{lohquestionsection}

A celebrated theorem of Erd\H{o}s and Szekeres \cite{esz} states that for any two positive integers $r,s$, every sequence of $rs+1$ (not necessarily distinct) numbers contains a monotone increasing subsequence of length $r+1$ or a monotone decreasing subsequence of length $s+1$. A short proof of this theorem is given by the pigeonhole principle. Assign to each number in the sequence an ordered pair $(x,y)$ where $x$ is the length of the longest increasing subsequence ending at this number, and $y$ is the decreasing analogue. It is then easy to see that all these ordered pairs have to be distinct, and therefore there is an ordered pair with first element at least $r+1$ or second element at least $s+1$.

The exact same proof also gives the following extension. Consider the $n$-vertex transitive tournament $T_n$, where the edge between $i<j$ is oriented in the direction $\overrightarrow{ij}$. Then every $2$-coloring of the edges of $T_n$ has a monochromatic directed path of length at least $\sqrt{n}$ (throughout this paper the length of a path means vertex-length, i.e. the number of vertices in the path). Moreover if we consider $r$-colored tournaments then the same proof shows that there exists a monochromatic tournament of length $n^{1/r}$. In fact, all the above results are sharp for infinitely many $n$, and they are also true for non-transitive tournaments.

Loh \cite{loh} asked about the following beautiful generalisation of the above. Determine $f(n,r,s)$, the maximum number such that every $r$-coloring of the edges of the transitive tournament on $n$ vertices contains a directed path with at least $f(n,r,s)$ vertices whose edges have at most $s$ colors. By grouping together sets of $s$ colors, a similar argument as the above shows that $$n^{1/\lceil r/s \rceil}\leq f(n,r,s),$$
and a standard construction shows that the upper bound
$$f(n,r,s)\leq n^{s/r}$$
holds whenever $n$ is a perfect $r$-th power.
However, already the $r=3,s=2$ case is non-trivial, since the above bounds only give $\sqrt{n}\leq f(n,3,2)\leq n^{2/3}(1+o(1))$. Loh proved that the correct answer in this case is not $\sqrt{n}$. Here, $\log^*$ is the iterated logarithm, or the inverse of the tower function $T(n) = 2^{T(n-1)}$, $T(0) = 1$.
\begin{thm}[Loh, \cite{loh} \label{lohsresultlogstar}] There exists a constant $C>0$ such that
$$C\sqrt{n}\cdot e^{\log^*n}\leq f(n,3,2),$$
that is, every $3$-coloring of the edges of the transitive $n$-vertex tournament contains a directed path of length at least $C\sqrt{n}\cdot e^{\log^*n}$ whose edges use at most $2$ colors.
\end{thm}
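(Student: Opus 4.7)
The natural starting point is the $\sqrt{n}$ bound obtained by a direct Erd\H{o}s--Szekeres pigeonhole. For each vertex $v$ of the transitive tournament, let $\rb(v)$, $\rg(v)$, and $\bg(v)$ denote the lengths of the longest directed red--blue, red--green, and blue--green paths ending at $v$. If two vertices $u<v$ share the same value of $\rb$, the edge $uv$ is forced to be green (otherwise one could extend the red--blue path to $u$); if they also share $\rg$, that edge must simultaneously be blue, which is impossible. Thus the $n$ pairs $(\rb(v),\rg(v))$ are distinct in $[L]^{2}$, where $L=f(n,3,2)$, and $L\ge\sqrt{n}$ follows. My plan is to bootstrap this base estimate, producing an additive constant gain at each iteration so that the factor $e^{\log^{*}n}$ accumulates after $\log^{*}n$ iterations.

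For the bootstrap, the key structural observation is that the level sets $S_{k}=\{v:\rb(v)=k\}$ partition the vertex set into at most $L$ green cliques of size at most $L$. By averaging, one of these classes has size at least $n/L\approx\sqrt{n}$, and within it every internal edge is green; the remaining two colors appear only across level sets. I would try to exploit this asymmetry to produce a sub-tournament of size roughly $\sqrt{n}$ on which the coloring is essentially $2$-colored, then iterate. The natural refinement is to further partition each $S_{k}$ by the value of $\rg$, obtaining a two-level block structure from which one extracts either a long green path or a highly structured sub-tournament to which the inductive hypothesis applies.

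Concretely, I would aim to establish a recursion of the form $g(n)\ge g(m)+\Omega(1)$ where $g(n):=f(n,3,2)/\sqrt{n}$ and $m=n^{1/2+o(1)}$. The solution of such a recursion is $g(n)=\Theta(\log^{*}n)$, which recovers the target bound. The main obstacle will be genuinely producing the constant additive gain in $g$: the colorings that saturate the $\sqrt{n}$ pigeonhole have very rigid structure, and ruling them out requires simultaneously tracking all three statistics $\rb,\rg,\bg$ and exploiting the extremality to find either a direct improvement or a clean sub-instance. A secondary subtlety is ensuring that the sub-tournament produced in the recursive step retains enough of the $3$-colored structure to support induction, so the choice of sub-tournament must be made delicately — likely by passing to a subset on which one color is globally rare rather than merely locally absent.
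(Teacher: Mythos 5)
This statement is quoted from Loh's paper \cite{loh}; the present paper does not prove it, so there is no in-paper argument to compare against. Judged on its own terms, your proposal is not a proof: it correctly reproduces the classical pigeonhole argument giving $f(n,3,2)\geq\sqrt{n}$ (the pairs $(\rb(v),\rg(v))$ are indeed distinct, and the level sets of $\rb$ are green cliques), but everything beyond that is a plan whose central step you explicitly leave open. The entire content of Loh's theorem is the mechanism that produces a gain over $\sqrt{n}$ at each stage of an iteration, and your sketch says only that you ``would try to exploit this asymmetry'' and that ``the main obstacle will be genuinely producing the constant additive gain.'' That obstacle is the theorem.

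Two concrete problems with the scheme as stated. First, the recursion you write down does not have the solution you claim: $g(n)\geq g(m)+\Omega(1)$ with $m=n^{1/2+o(1)}$ iterates $\Theta(\log\log n)$ times before bottoming out (since $\log^{*}$ is essentially unchanged by taking a square root), so its solution is $\Omega(\log\log n)$, not $\Theta(\log^{*}n)$; a bound of shape $e^{\log^{*}n}$ instead requires a \emph{multiplicative} gain each time $n$ is reduced to roughly $\log n$, which is a very different (and much more delicate) structure. Second, passing to a single sub-tournament of size about $\sqrt{n}$ that is ``essentially $2$-colored'' is counterproductive on its own: Erd\H{o}s--Szekeres applied inside it yields only a path of length about $n^{1/4}$. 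To beat $\sqrt{n}$ one must concatenate paths extracted from many blocks of the level-set decomposition, and controlling how those partial paths link up across blocks is precisely where the real work lies; your proposal does not address it. As written, the argument establishes only $f(n,3,2)\geq\sqrt{n}$.
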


The question of determining the order of magnitude of $f(n,3,2)$, and in general that of $f(n,r,s)$, is still wide open. A direct corollary of Theorem \ref{chrom2} is a partial answer to Loh's question, which was the main motivation of this paper. Recall that a \emph{rainbow triangle} is a triangle whose edges are all different colors. A \emph{Gallai}-$r$-\emph{coloring} of $K_n$ is an $r$-coloring of the edges of $K_n$ without rainbow triangles.

\begin{thm}\label{poshenmainthm}
Let $r,s,n$ be positive integers with $s\leq r$.
Every Gallai-$r$-coloring of an $n$-vertex tournament contains a directed path on at least $n^{s/r}$ vertices, whose edges use at most $s$ colors.
\end{thm}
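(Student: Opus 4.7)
The plan is to deduce Theorem \ref{poshenmainthm} directly from Theorem \ref{chrom2} via the Gallai--Roy theorem, which states that for any orientation $D$ of a graph $G$, the longest directed path in $D$ has at least $\chi(G)$ vertices.

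First, I would take the given Gallai-$r$-colored tournament $T$ on $n$ vertices and forget the orientations of the edges to obtain a Gallai-$r$-coloring of the complete graph $K_n$ on the same vertex set. The condition of being rainbow-triangle-free is unaffected by forgetting orientations, so this is a legitimate Gallai-$r$-coloring of $K_n$. Next, I would apply Theorem \ref{chrom2} to this coloring to extract a subgraph $H \subseteq K_n$ whose edges use at most $s$ colors and which satisfies $\chi(H) \geq n^{s/r}$.

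The final step is to transfer the chromatic-number bound into a directed-path bound. The tournament $T$ induces an orientation of $H$; call this oriented graph $\vec{H}$. By the Gallai--Roy theorem applied to $H$ with this orientation, $\vec{H}$ contains a directed path on at least $\chi(H) \geq n^{s/r}$ vertices. Since every edge of this path lies in $H$ and $H$ uses at most $s$ colors, we have produced a directed path in $T$ on at least $n^{s/r}$ vertices whose edges use at most $s$ colors, as required.

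There is no real obstacle here; all the work is already hidden inside Theorem \ref{chrom2}. The only thing worth double-checking is that the Gallai--Roy theorem is being invoked correctly: it relates $\chi(G)$ to the \emph{given} orientation (rather than an optimal one), which is precisely what we need, since the orientation is dictated by the tournament and we have no freedom to choose it.
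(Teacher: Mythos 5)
Your proposal is correct and is essentially identical to the paper's own proof: apply Theorem \ref{chrom2} to the underlying Gallai-$r$-coloring of $K_n$ and then use the Gallai--Hasse--Roy--Vitaver theorem on the induced orientation of the resulting $s$-colored subgraph. Your extra remark that Gallai--Roy applies to \emph{every} orientation, not just an optimal one, is exactly the right point to check and is handled the same way in the paper.
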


Note that every $2$-coloring is rainbow-triangle free, hence Theorem \ref{poshenmainthm} is a generalisation of the Erd\H{o}s--Szekeres Theorem.  We emphasize that our result holds for non-transitive tournaments as well, but our proof methods completely break down for colorings that contain rainbow triangles. 

Nevertheless, our guess is that  $f(n,r,s)=n^{s/r}$ holds whenever $n$ is a perfect $r$-th power. A reason for this is as follows. Suppose $r=3$ and $s=2$, and $u,v,w$ are vertices of the tournament such that $\overrightarrow{uv}$ is red, $\overrightarrow{vw}$ is blue and $\overrightarrow{uw}$ is green. Consider what happens if we recolor the $\overrightarrow{uw}$ edge to become, say, color red. Then the lengths of the red-green paths do not change, as the set of red-green edges did not change. The length of the longest red-blue paths did not change, as any path that contained the $\overrightarrow{uw}$ edge could have instead contained the $\overrightarrow{uv}$ and $\overrightarrow{vw}$ edges, giving a longer path. Finally, the length of the longest blue-green paths did not increase, as the set of blue-green edges descreased by the $\overrightarrow{uw}$ edge. Hence, in some sense, destroying this $uvw$ rainbow triangle gave us a better coloring, with shorter two-colored paths. Unfortunately, it is not the case that by repeating moves like the above one can always transform the coloring into a Gallai-coloring, hence this paragraph is nothing more than a (more or less) convincing heuristic argument.

\section{The proof of Theorem \ref{poshenmainthm}}

We will deduce Theorem \ref{poshenmainthm} from Theorem \ref{chrom2}. 
The connection between chromatic number and longest paths in orientations of graphs is given by the Gallai-Hasse-Roy-Vitaver Theorem. 

\begin{thm}[\label{gallairoy}Gallai-Hasse-Roy-Vitaver, \cite{gallairoy}]
Every orientation of the edges of a graph $G$ has a directed path on at least $\chi(G)$ vertices.
\end{thm}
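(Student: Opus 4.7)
The plan is to prove the contrapositive: if every directed path in an orientation $D$ of $G$ has at most $k$ vertices, then $\chi(G)\leq k$. The strategy is to exhibit an explicit proper coloring $f:V(G)\to\{1,\ldots,k\}$, obtained by letting $f(v)$ record the number of vertices on a longest directed path ending at $v$ in a carefully chosen acyclic subdigraph of $D$.

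As a warm-up, consider the case where $D$ itself is acyclic. Then $f(v)$, defined using paths in $D$, is well-defined and lies in $\{1,\ldots,k\}$. For any edge of $G$, oriented $u\to v$ in $D$, a longest directed path ending at $u$ can be extended by the arc $u\to v$, yielding $f(u)<f(v)$. Hence $f$ is already a proper $k$-coloring of $G$.

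The main obstacle is that an arbitrary $D$ may contain directed cycles, in which case $f$ fails to be well-defined. To overcome this, I would pass to a spanning subdigraph $D'\subseteq D$ which is acyclic and \emph{maximal} with this property; such a $D'$ exists by an obvious greedy construction. Maximality means that for every arc $u\to v\in D\setminus D'$, the digraph $D'\cup\{u\to v\}$ contains a directed cycle, so $D'$ already contains a directed path from $v$ to $u$. Redefining $f(v)$ to count the vertices on a longest directed path in $D'$ ending at $v$, we still have $f(v)\in\{1,\ldots,k\}$, since $D'\subseteq D$ and so the longest path of $D'$ has at most $k$ vertices.

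It remains to check that $f$ is a proper coloring of $G$, which I would verify by splitting on whether an edge lies in $D'$. For each edge $uv$ oriented $u\to v$ in $D$: if $u\to v\in D'$, then $f(u)<f(v)$ by the warm-up argument; if $u\to v\notin D'$, maximality provides a directed path $v=w_0\to w_1\to\cdots\to w_m=u$ in $D'$, along which $f$ strictly increases, so $f(v)<f(u)$. In either case $f(u)\neq f(v)$, so $f$ is a proper $k$-coloring and $\chi(G)\leq k$, as required.
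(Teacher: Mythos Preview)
Your argument is correct and is in fact the standard proof of the Gallai--Hasse--Roy--Vitaver theorem. The one step you gloss over slightly is why, in the acyclic digraph $D'$, a longest path ending at $u$ does not already pass through $v$ when $u\to v$ is an arc of $D'$; but this is immediate, since otherwise the segment from $v$ to $u$ together with $u\to v$ would form a directed cycle in $D'$.

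As for comparison with the paper: there is nothing to compare. The paper does not prove this theorem at all; it merely states it with a citation to Gallai's 1968 paper and then invokes it as a black box to deduce Theorem~\ref{poshenmainthm} from Theorem~\ref{chrom2}. Your write-up therefore supplies a proof that the paper deliberately omits.
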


\begin{proof}[Proof of Theorem \ref{poshenmainthm}]
We are given a Gallai-$r$-coloring of an $n$-vertex tournament. By Theorem \ref{chrom2} we can find an $s$-colored subgraph $G$ that has chromatic number at least $n^{s/r}$. By Theorem \ref{gallairoy} we can find a directed path $P$ in $G$ on at least $n^{s/r}$ vertices. As the edges of $G$ use at most $s$ colors, and $P$ is in $G$, we conclude that the edges of $P$ use at most $s$ colors and the proof is complete.
\end{proof}

A folklore construction shows that Theorems \ref{poshenmainthm} and \ref{chrom2} are sharp whenever $n$ is a perfect $r$-th power.

\begin{constru}\label{eszconstrueasy}
Consider the $n^r$ numbers $\{0,1,\ldots,n^r-1\}$ which will be the vertices of the graph. Write every such number as a $r$-digit base-$n$ number (by adding trailing zeros if necessary). Color edge $i<j$ according to the leftmost digit in which they differ and orient them as $\overrightarrow{ij}$ This $r$-colored transitive tournament has the property that the length of any $s$-colored path is at most $n^{s}$, and hence the chromatic number of the corresponding subgraph is at most $n^{s}$.
\end{constru}

\section{Proof of Theorem \ref{chrom2}}

We will deduce Theorem \ref{chrom2} from the following stronger statement. 

\begin{thm}\label{chromaticnumber}
Let $r,s,n\in\mathbf{Z}^+$ with $s\leq r$. Given a Gallai-$r$-coloring of $K_n$ using colors $[r]$ and given $S\subset [r]$ write $G_S$ for the subgraph whose edges are colored by elements of $S$. Then 
$$n^{\binom{r-1}{s-1}}\leq \prod_{|S|=s} \chi(G_S),$$
where the product goes over all $S\subset [r]$ with $|S|=s$. 
\end{thm}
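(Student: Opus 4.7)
I would induct on $n$ (with $r$ and $s$ fixed), the base case $n=1$ being trivial since both sides equal $1$. For $n\geq 2$, the first step is to apply Gallai's structure theorem to obtain a partition $V(K_n)=V_1\sqcup\cdots\sqcup V_m$ with $m\geq 2$ such that between any two blocks all edges have the same color and only two colors $a,b\in[r]$ appear among the between-block colors. Write $n_i=|V_i|$ and let $R_a,R_b$ be the two monochromatic reduced graphs on $[m]$; since $R_a\cup R_b=K_m$, the $s=1,r=2$ product-coloring argument gives $\chi(R_a)\cdot\chi(R_b)\geq m$. Each block inherits a Gallai-$r$-coloring, so the inductive hypothesis yields $n_i^{\binom{r-1}{s-1}}\leq \prod_{|S|=s}\chi(G_S[V_i])$ for every $i$.

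The second step is to bound $\chi(G_S)$ for each $s$-subset $S\subseteq[r]$ in terms of the within-block chromatic numbers, splitting into cases according to $|S\cap\{a,b\}|$. If $\{a,b\}\subseteq S$, then $G_S$ contains the complete multipartite graph on $V_1,\dots,V_m$, so distinct blocks need pairwise disjoint color palettes; this yields the Minkowski-type bound $\chi(G_S)\geq \sum_{i=1}^m\chi(G_S[V_i])$. If $|S\cap\{a,b\}|=1$, say $a\in S$ and $b\notin S$, then the between-block portion of $G_S$ is $R_a$ blown up on the blocks; projecting each block to any single color of its palette produces a proper coloring of $R_a$, so $\chi(G_S)\geq \chi(R_a)$, and of course $\chi(G_S)\geq \chi(G_S[V_i])$ for every $i$. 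Finally, if $S\cap\{a,b\}=\emptyset$, then $G_S$ is the disjoint union of within-block graphs, and $\chi(G_S)=\max_i\chi(G_S[V_i])$.

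The third step is to combine these estimates via Mahler's inequality $\prod_{j=1}^{K}\bigl(\sum_{i=1}^m a_{i,j}\bigr)^{1/K}\geq \sum_{i=1}^m\bigl(\prod_{j=1}^K a_{i,j}\bigr)^{1/K}$, applied to the $K=\binom{r-2}{s-2}$ subsets $S\supseteq\{a,b\}$ (where the sum-bound holds). This Mahler output, combined with the block-wise inductive hypothesis, the factor $\chi(R_a)\chi(R_b)\geq m$ for the ``middle'' subsets, and the trivial $\chi(G_S)\geq\chi(G_S[V_i])$ for the remaining subsets, should yield $\prod_{|S|=s}\chi(G_S)\geq (\sum_i n_i)^{\binom{r-1}{s-1}}=n^{\binom{r-1}{s-1}}$. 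I expect the hard part to be exactly this final combination: when $m=2$ (only one between-block color), the argument reduces cleanly to Mahler's inequality together with a simple maximum estimate on the $S$'s that avoid the between-block color, and the inequality $n_1(M/Q_1)^{1/N}+n_2(M/Q_2)^{1/N}\geq n_1+n_2$ (with $M,Q_i$ denoting the relevant partial products) closes the gap; but when $m\geq 3$ one must use $\chi(R_a)\chi(R_b)\geq m$ to compensate for the block-sum-versus-max discrepancy so that the final exponent is exactly $\binom{r-1}{s-1}$ rather than being diluted by a factor of $s$.
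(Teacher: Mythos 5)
Your skeleton (induction on $n$, Gallai's structure theorem, a case split on $|S\cap\{a,b\}|$, and a H\"older/Mahler combination) matches the paper's, but the treatment of the ``middle'' sets --- those with $|S\cap\{a,b\}|=1$ --- is where the proof actually lives, and your bounds there are too weak to close the argument. The estimates you propose for such $S$, namely $\chi(G_S)\geq\chi(R_a)$ and $\chi(G_S)\geq\max_i\chi(G_S[V_i])$, together with $\chi(R_a)\chi(R_b)\geq m$, cannot recover the exponent $\binom{r-1}{s-1}$. Concretely, take $r=3$, $s=2$, $m$ blocks each of size $t=n/m$ with all within-block edges colored $3$, and between-block colors $1,2$ arranged so that $\chi(R_1)=\chi(R_2)=\sqrt m$ (the standard product coloring of $K_m$ for $m$ a perfect square). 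Your lower bounds give $\chi(G_{\{1,2\}})\geq m$ and $\chi(G_{\{1,3\}}),\chi(G_{\{2,3\}})\geq\max(\sqrt m,\,t)$, whose product is $\max(m^2,\,n^2/m)$; for $1<m$ and $n>m^{3/2}$ this is strictly less than the required $n^2$. (The theorem still holds for this coloring, with equality in fact --- it is your derived bounds that fail, not the statement.)

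What is missing is the paper's key Claim~\ref{beautyclaim}: for $S$ with $q_1\in S$, $q_2\notin S$ and $S^*:=S\cup\{q_2\}\setminus\{q_1\}$, one has the \emph{paired, additive} bound
$$\chi(G_S)\,\chi(G_{S^*})\;\geq\;\sum_{i=1}^m\chi(G_S[V_i])\,\chi(G_{S^*}[V_i]),$$
proved by replacing each block by an auxiliary two-colored complete graph on $\chi(G_S[V_i])\chi(G_{S^*}[V_i])$ vertices (so that the two within-block color classes become complements of each other without changing either chromatic number, via Observation~\ref{replacingchromthingie}) and then applying $\chi(H)\chi(\overline{H})\geq|V(H)|$ to the resulting graph. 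In the example above this gives $\chi(G_{\{1,3\}})\chi(G_{\{2,3\}})\geq mt^2=n^2/m$, which is exactly what is needed. With this claim in hand, H\"older is applied over all $\binom{r-1}{s-1}$ sets containing $q_1$ (each middle set contributing the product $\chi(G_S[V_i])\chi(G_{S^*}[V_i])$ as its summand, each set containing both colors contributing $\chi(G_S[V_i])$), and that is precisely where the exponent $\binom{r-1}{s-1}$ comes from --- not from the $\binom{r-2}{s-2}$ sets containing both of $a,b$. Without some substitute for this claim your final combination step cannot go through.
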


The proof of Theorem \ref{chromaticnumber} shows a lot of similarities to the proof of Theorem $7.2.$ in \cite{fox}, the main new idea is in Claim \ref{beautyclaim}. Recall that Theorem \ref{chromaticnumber} is false for general colorings. Theorem \ref{chrom2} is an immediate consequence of Theorem \ref{chromaticnumber}.

\begin{proof}[Proof of Theorem \ref{chrom2}]
Given a Gallai-$r$-coloring on $n$ vertices, Theorem \ref{chromaticnumber} states that the geometric mean of the $\chi(G_S)$-s is at least $n^{s/r}$. Hence in particular there exists an $S\subset [r]$ with $|S|=s$ such that $\chi(G_S)\geq n^{s/r}$, as claimed.
\end{proof}

To prove Theorem \ref{chromaticnumber} we will need another theorem by Gallai:
\begin{lemma}[Gallai, \cite{gallaicolpaper}]\label{gallai}
An edge-coloring $F$ of a complete graph on a vertex set $V$ with $|V | \geq 2$ is a Gallai coloring if and only if $V$ may be partitioned into nonempty sets $V_1, \ldots , V_t$ with $t \geq 2$ so that each $V_i$ has no rainbow triangles under $F$, at most two colors are used on the edges not internal to any $V_i$, and the edges between any fixed pair $(V_i, V_j)$ use only one color. Furthermore, any such substitution of Gallai colorings for vertices of a $2$-edge-coloring of a complete graph $K_t$ yields a Gallai coloring.
\end{lemma}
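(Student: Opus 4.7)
I plan to prove the two directions of the lemma separately. For the \emph{if} direction (together with the ``furthermore'' substitution claim), a triangle case analysis suffices. A triangle whose three vertices lie in a common block $V_i$ is non-rainbow by the hypothesis on $V_i$; a triangle with two vertices in $V_i$ and one in $V_j$ has its two cross-edges share the unique color used between $V_i$ and $V_j$, so at most two colors appear; a triangle with one vertex in each of three distinct blocks uses only between-block edges, which by hypothesis draw from a two-color palette. The same case analysis applied to a coloring produced by substituting Gallai-colorings into a $2$-edge-coloring of $K_t$ verifies the ``furthermore'' clause.

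For the \emph{only if} direction I would induct on $|V|$, the base case $|V|=2$ being handled by the two singletons. In the inductive step, if the coloring uses at most two colors then the singleton partition into $|V|$ parts already satisfies both conditions. Otherwise at least three colors appear, and the key step is to find a color $c$ such that the monochromatic subgraph $G_c$ is disconnected, and take its connected components $V_1,\dots,V_t$ as the initial blocks. The monochromatic-between-pair condition follows from a path argument: given two between-block edges $u_1v_1, u_2v_2$ with $u_k\in V_i$ and $v_k\in V_j$, choose a $G_c$-path $u_1=w_0,w_1,\dots,w_k=u_2$ inside the $G_c$-component $V_i$; for each $\ell$, the triangle $\{w_\ell, w_{\ell+1}, v_1\}$ has top edge colored $c$ while both cross-edges $w_\ell v_1, w_{\ell+1} v_1$ avoid color $c$ (no $c$-edge crosses $G_c$-components), so rainbow-freeness forces $c(w_\ell v_1) = c(w_{\ell+1} v_1)$. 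Iterating yields $c(u_1v_1)=c(u_2v_1)$, and an analogous $G_c$-path in $V_j$ gives $c(u_2v_1)=c(u_2v_2)$.

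The ``at most two colors in total'' condition does not follow immediately from the $G_c$-component partition and requires a further coarsening. The induced quotient coloring on the reduced complete graph $K_t$ (one vertex per block, each edge colored by the common between-block color) is itself a Gallai coloring, and since at least one $G_c$-component has size at least two (because $c$ appears as an edge color) one has $t < |V|$, so the inductive hypothesis applies and yields a Gallai partition of $K_t$ into super-blocks using at most two between-super-block colors. Lifting this coarsening back to $V$ by taking unions of the original blocks produces the desired Gallai partition of $V$.

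The main obstacle is guaranteeing the existence of a color $c$ with $G_c$ disconnected whenever at least three colors appear---the classical Gyárfás--Simonyi structural observation. I would prove this by contradiction: assuming every color class is spanning and connected and that at least three colors appear, one can combine suitably chosen shortest monochromatic paths of two different colors meeting at a common vertex to construct a triangle whose three edges use three distinct colors, contradicting rainbow-triangle freeness. Choosing the paths so that the resulting triangle genuinely uses three distinct colors (rather than collapsing at an internal vertex along one of the paths) is the most delicate and technical part of the proof.
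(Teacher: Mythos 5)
The paper offers no proof of this lemma---it is quoted from Gallai with a citation---so there is no in-paper argument to compare against; I can only assess your proposal on its own terms. Its architecture is the standard one. The ``if'' direction (and the ``furthermore'' clause) by triangle case analysis is correct and complete. In the ``only if'' direction, the path argument showing that two components of a disconnected color class $G_c$ see each other in a single color is correct, as is the passage to the quotient coloring on $K_t$ and the appeal to induction on $|V|$: the quotient is Gallai because a rainbow triangle there lifts to one in $K_n$, and $t<|V|$ because some component of $G_c$ has at least two vertices once $c$ is a color that actually appears.

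The genuine gap is the step you yourself flag as delicate: the existence, in any Gallai coloring using at least three colors, of a used color $c$ whose class $G_c$ is disconnected as a spanning subgraph. This fact is not a technical auxiliary---it is essentially equivalent to the lemma being proved. It follows from the lemma (any used color other than the at most two inter-block colors lives entirely inside the blocks, hence is disconnected), and, as your reduction shows, it implies the lemma. So the entire content of the theorem is concentrated in the one step for which you offer only the sentence ``combine suitably chosen shortest monochromatic paths of two different colors meeting at a common vertex.'' It is not evident that this closes. For instance, assuming all color classes are connected and spanning, taking a color-$3$ edge $uv$ and a shortest $G_1$-path from $u$ to $v$, rainbow-freeness only forces the long chords from $u$ to carry color $3$; one does not immediately obtain a tricolored triangle, and bringing in a second path of another color leads to a nontrivial case analysis whose termination you have not demonstrated. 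The known proofs of this step go through a reduction to the three-color case (by merging all colors but two, which preserves both Gallai-ness and any disconnection of an individual class) followed by a careful analysis, or through Gallai's theory of transitive orientations. As it stands, your proposal is a correct reduction of the structure theorem to an equivalent statement, not a proof of it.
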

For some recent progress on generalising Lemma \ref{gallai}, we direct the reader to the beautiful paper of Leader--Tan \cite{leadertan}. We will also make use of the following observation.
\begin{obs}\label{replacingchromthingie}
Let $G$ be a graph on vertex set $V(G)$ and let $V_1\cup V_2\cup \ldots \cup V_m$ be a partition of $V(G)$ such that for each pair of distinct $i,j\in [m]$, either all edges of the form $\{uv : u\in V_i, v\in V_j\}$ are present in $G$, or none of them. For each $i\in [m]$, let $G'_i$ be an arbitrary graph with chromatic number $\chi(G'_i)=\chi(G[V_i])$. Let $H$ be the graph obtained from $G$ by replacing $G[V_i]$ by $G'_i$ for each $i\in [m]$. Then $\chi(G)=\chi(H)$.
\end{obs}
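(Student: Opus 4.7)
The plan is to establish $\chi(G) \leq \chi(H)$ and $\chi(H) \leq \chi(G)$ by symmetric arguments: the chromatic number of such a ``substituted'' graph should depend only on the chromatic numbers $\chi_i := \chi(G[V_i])$ of the blocks and on the auxiliary bipartite structure recording which pairs $(V_i, V_j)$ are joined completely. Since $G$ and $H$ share that bipartite structure and satisfy $\chi(G[V_i]) = \chi(G'_i) = \chi_i$, they should have the same chromatic number. Let me describe the direction $\chi(G) \leq \chi(H)$; the reverse inequality follows by swapping the roles of $G$ and $H$.

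Start with an optimal proper coloring of $H$ with $\chi(H)$ colors, and for each $i\in[m]$ let $S_i$ be the set of colors that actually appear on $V(G'_i)$. Since the restriction to $V(G'_i)$ is a proper coloring of $G'_i$, we have $|S_i| \geq \chi(G'_i) = \chi_i$. Now build a coloring of $G$ as follows: for each $i$, pick any proper coloring of $G[V_i]$ using colors drawn from $S_i$ — this is possible because $\chi(G[V_i]) = \chi_i \leq |S_i|$, so any $\chi_i$ colors in $S_i$ suffice.

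To check that the resulting coloring of $G$ is proper, the only edges I still need to worry about are between distinct blocks $V_i, V_j$. By hypothesis, these edges are either all present or all absent, matching the corresponding situation in $H$. If they are absent there is nothing to check. If they are present, then in $H$ every vertex of $V(G'_i)$ is adjacent to every vertex of $V(G'_j)$, so the color sets $S_i$ and $S_j$ are disjoint; hence the colors I used on $V_i$ (a subset of $S_i$) are disjoint from those used on $V_j$ (a subset of $S_j$), and the coloring is proper across blocks. Thus $\chi(G) \leq \chi(H)$, and by symmetry equality holds.

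There is no real obstacle here: the observation is essentially the familiar fact that the chromatic number of a lexicographic-type substitution $F[A_1,\dots,A_m]$ depends on the $A_i$ only through $\chi(A_i)$. The one small point to be careful about is confirming that ``some proper coloring of $G[V_i]$ drawn from a specified palette of size $\geq \chi_i$'' always exists, which is immediate from the definition of chromatic number.
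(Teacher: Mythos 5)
Your proof is correct and complete. Note that the paper itself states this as an Observation and supplies no proof at all, so there is nothing to compare against; your argument (transferring an optimal coloring of $H$ to $G$ via the palettes $S_i$, using that completely joined blocks force disjoint palettes, and invoking symmetry for the reverse inequality) is exactly the standard justification one would expect the author to have had in mind, and it handles the only point that needs care — that a proper coloring of $G[V_i]$ can be drawn from any palette of size at least $\chi(G[V_i])$.
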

The following is a common generalisation of H\"{o}lder's inequality that we will find useful.
\begin{lemma}\label{holderineq}
If $\mathcal{F}$ is a finite set of indices and for each $S\in\mathcal{F}$ we have that $a_S$ is a function mapping $[m]$ to the non-negative reals, then
$$\prod_{S\in\mathcal{F}}\sum_i a_S(i)\geq\left(\sum_i\prod_{S\in\mathcal{F}}a_S(i)^{1/|\mathcal{F}|}\right)^{|\mathcal{F}|}.$$
\end{lemma}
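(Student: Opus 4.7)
The plan is to recognize this as the symmetric case of the generalized Hölder inequality. Writing $k:=|\mathcal{F}|$ and taking the $k$-th root of both sides, the claim is equivalent to
$$\sum_{i}\prod_{S\in\mathcal{F}}a_S(i)^{1/k}\;\le\;\prod_{S\in\mathcal{F}}\Bigl(\sum_{i}a_S(i)\Bigr)^{1/k},$$
which is precisely Hölder's inequality applied with $k$ conjugate exponents all equal to $k$ (so that $\sum_{S}\tfrac{1}{k}=1$), used on the nonnegative functions $f_S(i):=a_S(i)^{1/k}$ on $[m]$. So morally the lemma is a one-liner once one invokes the general form of Hölder as a black box.

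If one prefers a self-contained derivation, I would proceed by induction on $k$. The base case $k=1$ is trivial. For the inductive step, pick any $S_0\in\mathcal{F}$ and split each summand as $a_{S_0}(i)^{1/k}\cdot\prod_{S\neq S_0}a_S(i)^{1/k}$. Apply ordinary two-variable Hölder with exponents $p=k$ and $q=k/(k-1)$ to obtain
$$\sum_{i}\prod_{S\in\mathcal{F}}a_S(i)^{1/k}\;\le\;\Bigl(\sum_{i}a_{S_0}(i)\Bigr)^{1/k}\Bigl(\sum_{i}\prod_{S\neq S_0}a_S(i)^{1/(k-1)}\Bigr)^{(k-1)/k}.$$
The inner sum on the right is precisely in the form handled by the induction hypothesis on the index set $\mathcal{F}\setminus\{S_0\}$ of size $k-1$, which bounds it by $\prod_{S\neq S_0}(\sum_i a_S(i))^{1/(k-1)}$. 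Raising to the $(k-1)/k$ power and combining with the $S_0$ factor yields the desired bound $\prod_{S\in\mathcal{F}}(\sum_i a_S(i))^{1/k}$.

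I do not expect any real obstacle here; the only things to watch are (i) that the exponents cleanly satisfy the Hölder conjugacy condition $\tfrac{1}{p}+\tfrac{1}{q}=1$ with the choice $p=k$, $q=k/(k-1)$, and (ii) that raising $\prod_{S\neq S_0}a_S(i)^{1/k}$ to the $q$-th power produces exactly $\prod_{S\neq S_0}a_S(i)^{1/(k-1)}$, so that the induction hypothesis applies verbatim. Both checks are straightforward bookkeeping.
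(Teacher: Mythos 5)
Your proof is correct. The paper states Lemma \ref{holderineq} without proof, treating it as a known generalisation of H\"older's inequality, so there is no argument in the paper to compare against; your reduction (taking $k$-th roots and applying the $k$-exponent H\"older inequality, or equivalently the induction on $k$ via the two-function case with exponents $p=k$, $q=k/(k-1)$) is the standard derivation and all the exponent bookkeeping checks out.
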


\begin{proof}[Proof of Theorem \ref{chromaticnumber}]
We prove the theorem by induction on $n$. If $n=1$ then every $\chi(G_S)$ is equal to $1$, and $n^{\binom{r-1}{s-1}}$ is also equal to $1$. If $n>1$, we can find a pair of colors $Q$ and some non-trivial partition of the vertices $V_1,\ldots,V_m$ such that for each pair of distinct $i,j$ in $[m]$, there is a $q\in Q$ so that all of the edges between $V_i$ and $V_j$ have color $q$. For each $S\subset [r]$ and $i\in[m]$ let $G_{S,i}$ be the subgraph of the complete graph on $V_i$ consisting of edges colored by colors in $S$. Write $\chi(S,i):=\chi(G_{S,i})$ and $\chi(S)=\chi(G_S)$.
Let $Q=\{q_1,q_2\}$. If $q_1\in S$ and $q_2\notin S$ then let $S^*:=S\cup\{q_2\}\setminus \{q_1\}$. 
\begin{claim}\label{beautyclaim} If $S\subset [k]$ with $q_1\in S$ and $q_2\notin S$, then
$$\chi(S)\chi(S^*)\geq \sum_{i=1}^m \chi(S,i)\chi(S^*,i).$$
\end{claim}
\begin{proof}[Proof of claim:]
For each $i\in[m]$, let $G_i'$ be the $2$-colored complete graph on $\chi(S,i)\chi(S^*,i)$ vertices, obtained by taking $\chi(S,i)$ disjoint copies of $K_{\chi(S^*,i)}$, coloring all edges inside the cliques by color $q_2$, and edges between the cliques by color $q_1$. For each $i$, replace $G[V_i]$ by $G'_i$, to obtain a $2$-colored complete graph $G'$ on $\sum_{i=1}^m \chi(S,i)\chi(S^*,i)$ vertices - let $H_1$ and $H_2$ be the subgraphs induced by edges of color $q_1$ and $q_2$ respectively. Note that $\chi(H_1)=\chi(S)$ and $\chi(H_2)=\chi(S^*)$ by Observation~ \ref{replacingchromthingie}, and since $H_2$ is the complement of $H_1$ we also have $\chi(S)\chi(S^*)=\chi(H_1)\chi(H_2)\geq |V(G')|=\sum_{i=1}^m \chi(S,i)\chi(S^*,i)$.
\end{proof}

In what follows we will always omit writing $|S|=s$ in the subscripts of products for clearer presentation. By induction, for all $i$ we have 
\begin{equation}\label{inductioneq}
|V_i|^{\binom{r-1}{s-1}}\leq \prod_{S} \chi(S,i).
\end{equation}
 Note that 
\begin{itemize}
\item if $Q\cap S = \emptyset $ then $\chi(S)=\max_i\{\chi(S,i)\}$,
\item If $Q\subseteq S$ then $\chi(S)=\sum_i(\chi(S,i))$ where the sum is over all $i\in [m]$.
\end{itemize}
Hence we have, using Claim \ref{beautyclaim}, that
\begin{equation}\label{claimapplied}
\prod_{S}\chi(S)\geq \left(\prod_{S:Q\cap S=\emptyset} \chi(S)\right)\left(\prod_{S:q_1\in S,q_2\notin S} \left(\sum_{i=1}^m\chi(S,i)\chi(S^*,i)\right)\right)\left(\prod_{S:Q\subseteq S}\sum_{i=1}^m \chi(S,i) \right).
\end{equation}

To simplify notation, if $q_1\in S$ and $q_2\notin S$ then write $\alpha(S,i):=\chi(S,i)\chi(S^*,i)$, and if $Q\subseteq S$ then write $\alpha(S,i):=\chi(S,i)$.  Let $\mathcal{F}=\{S\subset [r]: |S|=s, q_1\in S\}$. So 
\begin{equation*}
\begin{split}
&\prod_{S}\chi(S)\geq  \left(\prod_{S\in \mathcal{F}} \sum_{i=1}^m \alpha(S,i)\right)\prod_{S:Q\cap S=\emptyset} \chi(S)\geq \left(\sum_{i=1}^m \left( \prod_{S\in\mathcal{F}} \alpha(S,i)^{1/|\mathcal{F}|}\right)\right)^{|\mathcal{F}|}\prod_{S:Q\cap S=\emptyset} \chi(S) =\\
& \left(\sum_{i=1}^m \left(\prod_{S:Q\cap S=\emptyset} \chi(S) \prod_{S\in\mathcal{F}} \alpha(S,i)\right)^{1/|\mathcal{F}|}\right)^{|\mathcal{F}|}\geq 
 \left(\sum_{i=1}^m \left(\prod_{S}\chi(S,i)^{1/|\mathcal{F}|}\right)\right)^{|\mathcal{F}|}\geq \left(\sum_{i=1}^m |V_i|\right)^{|\mathcal{F}|}=n^{\binom{r-1}{s-1}},
\end{split}
\end{equation*}
where the first inequality is by rewriting (\ref{claimapplied}), the second inequality is by Lemma \ref{holderineq}, the third inequality is by $\chi(S)\geq \chi(S,i)$, and the fourth inequality is by (\ref{inductioneq}), using that $|\mathcal{F}|=\binom{r-1}{s-1}$. This completes the proof.
\end{proof}

\noindent\textbf{Note added after the refereeing process:} Recently Gowers--Long~\cite{gowerslong} improved Theorem~\ref{lohsresultlogstar} by showing that there exists $\eps>0$ such that $n^{0.5+\eps}\leq f(n,3,2)$.

\section{Acknowledgement}

The author is very grateful to J\'{o}zsef Balogh, Po-Shen Loh and Andrew Suk for many helpful discussions, and to Emily Heath and Ruth Luo for careful reading of the manuscript.

\end{document}